\theoremstyle{definition}     
\newtheorem{Def}{Definition}
\newtheorem{Rem}[Def]{Remark}
\newtheorem{Theo}[Def]{Theorem}
\newtheorem{Cor}[Def]{Corollary}
\newcommand\dint{{\, \rm d}}
\newlength{\fixboxwidth}    
\title{Equivalence of anchored and ANOVA spaces via interpolation} 
\author{Aicke Hinrichs \\
Institut f\"ur Analysis, Johannes Kepler Universit\"at Linz\\   
Altenberger Str. 69, 4040 Linz, Austria\\   
email: aicke.hinrichs@jku.at \\ 
\qquad
\\
Jan Schneider \\
Institut f\"ur Mathematik, Friedrich Schiller Universit\"at Jena\\   
Ernst-Abbe-Platz 2, 07743 Jena, Germany\\   
email: jan.schneider@uni-jena.de  \\ 
}   
\begin{document}    
 
\maketitle    
   
\begin{abstract}    
We consider weighted anchored and ANOVA spaces of functions with first order mixed derivatives bounded in $L_p$.
Recently,  Hefter, Ritter and Wasilkowski established conditions on the weights in the cases $p=1$ and $p=\infty$ 
which ensure equivalence of the corresponding norms uniformly in the dimension or only polynomially dependent on
the dimension. We extend these results to the whole range of $p\in [1,\infty]$.
It is shown how this can be achieved via interpolation. 
\end{abstract}     

\bigskip 
 
\noindent \textit{MSC:} 65D30,65Y20,41A63,41A55 \medskip 
 
\noindent \textit{Keywords:} ANOVA decomposition, anchored decomposition, norm equivalences
 
\section{Introduction}    

In this paper we continue the study of norm equivalences of anchored and ANOVA spaces of functions with bounded order one mixed partial derivatives. Equivalence of these norms in the Hilbert space case was recently shown by M. Hefter and K. Ritter in \cite{HR14}. They prove very general norm equivalences for reproducing kernel Hilbert spaces. The main point is the possibility of transferring error bounds for algorithms and complexity bounds from one setting to another. 

One example is the multivariate decomposition method or changing dimension algorithm introduced in \cite{KSWW10}. This method has been shown to be very effective for the
anchored decomposition  for spaces of functions with mixed partial derivatives bounded in $L_p$ norms in \cite{W14a}. To transfer these results to the ANOVA setting, a norm equivalence is needed. In the case $p=2$ this follows from the results in \cite{HR14}. In the cases $p=1$ and $p=\infty$, such norm equivalences where recently established by 
M. Hefter, K. Ritter and G. W. Wasilkowski in  \cite{HRW14}. Naturally, the question arises what happens for $1<p<2$ and $2<p<\infty$.  
It is the purpose of this note to derive the corresponding results via interpolation methods.

In Section \ref{sec2}, we introduce the considered spaces of functions with mixed derivatives and state our main general result.
In Section \ref{sec3}, we characterize the spaces as $\ell_p$-sums of certain $L_p$-spaces, which is a suitable characterization for interpolation.
In Section \ref{sec4}, we proof the main result.
In Section \ref{sec5}, we apply the result to different classes of special weights.

\section{Spaces of multivariate functions} \label{sec2}

In this section we introduce the ANOVA and anchored spaces as completions of algebraic tensor products. 
This approach is slightly different from the one taken in \cite{HRW14} but the resulting spaces are the same.
Our approach is better suited to the direct application of complex interpolation, which is the main technical tool later.

Let $1\leq p\leq\infty$.
In the case $d=1$ we consider the space $F_p$ of  complex valued absolutely continuous functions on the interval $[0,1]$ with first derivatives bounded in $L_p$.
We work with complex valued functions since the direct application of the complex interpolation method needs complex scalars.
The following considerations can be done in both the real and complex case. For complex scalars, we just need to consider 
a complex valued function $f:[0,1]^d \to \mathbb{C}$ as a sum $f=g+ {\rm i} h$ of real and imaginary part and apply 
derivatives and integrals to both parts.
Now we consider the algebraic tensor product $$ F_{d,p} = \bigotimes_{j=1}^d F_p $$ of functions on $[0,1]^d$.
These functions have mixed derivatives of first order in $L_p$.
We denote by
$$ f^{(u)} = \prod_{j\in u} \frac{\partial }{\partial x_j} f $$
the mixed derivative of $f$ with respect to the variables in the subset $u \subset [d]=\{1,\dots,d\}$.

To denote function values of $d$-variate functions emphasizing different subsets $u \subset [d]$ of coordinates we use the convention to write
$$ f(x_u;t_{u^c}) := f(x_u;t) := f(y) \qquad\text{with } y_j=x_j \text{ for } j\in u  \text{ and } y_j=t_j \text{ for } j\not\in u.$$

Given a sequence $(\gamma_u)_{u\subset [d]}$ of nonnegative weights, we consider two norms on $F_{d,p}$, the anchored norm
\begin{equation}\label{anchnorm} \|f\|_{\pitchfork,d,p} = \left( \sum_{u\subset[d]} \gamma_u^{-p}  \left\| f^{(u)} (\cdot_u;0) \right\|^p_p  \right)^{1/p} = \left( \sum_{u\subset[d]} \gamma_u^{-p}  \int_{[0,1]^u}  \left| f^{(u)} (x_u;0) \right|^p \dint x_u  \right)^{1/p} \end{equation} 
and the ANOVA norm
\begin{eqnarray}\label{Anorm} \|f\|_{A,d,p} &=& \left( \sum_{u\subset[d]} \gamma_u^{-p}  \left\| \int_{[0,1]^{[d] \setminus u}} f^{(u)} (\cdot_u;t) \dint t \right\|^p_p \right)^{1/p} \nonumber\\
&=& \left( \sum_{u\subset[d]} \gamma_u^{-p}  \int_{[0,1]^u} \left| \int_{[0,1]^{[d] \setminus u}} f^{(u)} (x_u;t) \dint t \right|^p \dint x_u \right)^{1/p}.\end{eqnarray}
Here and in the following we do not explicitly state the standard modification required in the case $p=\infty$. 
Moreover, if the weight $\gamma_u$ is zero, we consider only the subspace $F_{d,p}$ of functions $f$ for which the corresponding term in the norm vanishes.

The space $F_{d,p}$ with respect to these norms will be denoted by $G_{\pitchfork,d,p}$ and  $G_{A,d,p}$ and its completions by $F_{\pitchfork,d,p}$ and  $F_{A,d,p}$, respectively. 
It was shown in \cite[Proposition 13]{HRW14} that the spaces $F_{\pitchfork,d,p}$ and  $F_{A,d,p}$ coincide independently of $p\in[1,\infty]$ if and only if the weights satisfy the
compatibility condition
$$ \gamma_u > 0 \qquad \text{implies} \qquad \gamma_v > 0 \text{ for all } v\subset u.$$  
From this point on we always assume that this condition is fulfilled.
Moreover, it was also observed in \cite{HRW14} that $F_{\pitchfork,d,p}$ and  $F_{A,d,p}$ can be identified with Banach function spaces with continuous point evaluations.
The closed graph theorem then ensures that the identity mappings 
$$J_{d,p}^{\pitchfork,A} :  F_{\pitchfork,d,p} \to  F_{A,d,p} \qquad \mbox{and} \qquad J_{d,p}^{A,\pitchfork} :  F_{A,d,p} \to  F_{\pitchfork,d,p}$$
are bounded. Our main interest in this note is to estimate the corresponding operator norms.

The cases $p=1$ and $p=\infty$ were considered in \cite{HRW14}. 
There the following constants were introduced
\begin{equation}\label{const}C_{d,1}=\max_{u\subset[d]}\sum_{v\subset u}\frac{\gamma_u}{\gamma_v}\quad\text{ and }\quad C_{d,\infty}=\max_{u\subset[d]}\sum_{v\subset u^c} 2^{-|v|}\frac{\gamma_{u\cup v}}{\gamma_u}.\end{equation}
The main result in \cite{HRW14} formulated as Theorem 14 there reads as
\begin{Theo}\label{HRW}
 Let $p=1$ or $p=\infty$. Then 
 $$ \| J_{d,p}^{\pitchfork,A} :  F_{\pitchfork,d,p} \to  F_{A,d,p} \| = \| J_{d,p}^{A,\pitchfork} :  F_{A,d,p} \to  F_{\pitchfork,d,p} \| = C_{d,p}.$$
\end{Theo}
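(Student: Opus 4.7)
The plan is to prove both operator norm equalities by matching upper and lower bounds. For bookkeeping I abbreviate the anchored object $B_u(x_u):=f^{(u)}(x_u;0)$ and the ANOVA object $A_u(x_u):=\int_{[0,1]^{u^c}} f^{(u)}(x_u;t)\dint t$, so that \eqref{anchnorm} and \eqref{Anorm} are built from the weighted quantities $\gamma_u^{-1}\|B_u\|_p$ and $\gamma_u^{-1}\|A_u\|_p$ respectively.

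The core of the upper bound is a pair of triangular integral identities obtained by iterating the two one-dimensional identities $\int_0^1 g(t)\dint t = g(0) + \int_0^1(1-s)g'(s)\dint s$ and $g(0) = \int_0^1 g\dint t - \int_0^1(1-s)g'(s)\dint s$ variable by variable. For smooth $g$ on $[0,1]^{u^c}$ these yield
\[ \int_{[0,1]^{u^c}} g(t)\dint t = \sum_{w\subset u^c} \int_{[0,1]^w} \prod_{j\in w}(1-s_j)\, g^{(w)}(s_w;0)\dint s_w \]
and dually
\[ g(0) = \sum_{w\subset u^c} (-1)^{|w|} \int_{[0,1]^{u^c\setminus w}}\int_{[0,1]^w} \prod_{j\in w}(1-s_j)\, g^{(w)}(s_w;t)\dint s_w\dint t. \]
Setting $g = f^{(u)}(x_u;\cdot)$ rewrites $A_u$ as a linear combination of the $B_{\tilde u}$ for $\tilde u\supset u$, and conversely $B_u$ as a signed linear combination of the $A_{\tilde u}$ for $\tilde u\supset u$, in both cases with the same integral kernel $\prod_{j\in\tilde u\setminus u}(1-s_j)$ on $[0,1]^{\tilde u\setminus u}$.

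Taking $L_p$-norms in $x_u$, applying the triangle inequality and dividing by $\gamma_u$ then yields both upper bounds $\|J_{d,p}^{\pitchfork,A}\|,\,\|J_{d,p}^{A,\pitchfork}\|\le C_{d,p}$. For $p=\infty$ the kernel integrates exactly to $2^{-|\tilde u\setminus u|}$, so the factor $\sum_{\tilde u\supset u}2^{-|\tilde u\setminus u|}\gamma_{\tilde u}/\gamma_u$ that appears is precisely the summand defining $C_{d,\infty}$ (with $v=\tilde u\setminus u$). For $p=1$ one uses the crude estimate $\prod_{j\in w}(1-s_j)\le 1$, after which Fubini and the reindexing $\sum_u\sum_{\tilde u\supset u}=\sum_{\tilde u}\sum_{u\subset\tilde u}$ produce exactly $\sum_{u\subset\tilde u}\gamma_{\tilde u}/\gamma_u\le C_{d,1}$. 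The matching lower bounds come from explicit extremal test functions aligned with a subset $u_0$ attaining the maximum in $C_{d,p}$: for $p=\infty$ the monomial sum $f=\sum_{\tilde u\supset u_0}\gamma_{\tilde u}\prod_{j\in\tilde u}x_j$ realises the operator norm of $J_{d,\infty}^{\pitchfork,A}$ exactly, with its centred analogue $\sum_{\tilde u\supset u_0}(-1)^{|\tilde u\setminus u_0|}\gamma_{\tilde u}\prod_{j\in\tilde u}(x_j-\tfrac12)$ doing the same for $J_{d,\infty}^{A,\pitchfork}$; for $p=1$ one takes $f=f_{u_0}^{\pitchfork}$ with $u_0$-derivative $\gamma_{u_0}\eps^{-|u_0|}\ind{[0,\eps]^{u_0}}$ and sends $\eps\to 0$. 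The hard part will be precisely this limiting step: the estimate $\prod(1-s_j)\le 1$ is strict for any fixed input, so $C_{d,1}$ is attained only asymptotically, and one has to verify that no unwanted off-diagonal anchored or ANOVA components arise from the extremal sequence in the limit.
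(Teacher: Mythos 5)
The paper itself does not prove Theorem \ref{HRW}: it is imported verbatim from \cite{HRW14} (Theorem 14 there), the only added content being the remark that the upper and lower bounds survive the passage to complex scalars. So your argument is necessarily a different, self-contained route, and it is essentially sound. Your two iterated identities are correct (they are exactly the tensorized anchored and ANOVA decompositions, i.e.\ the two parts of Lemma 1 of \cite{HRW14} that the paper quotes in Section \ref{sec3}); with $g=f^{(u)}(x_u;\cdot)$ they give $A_u=\sum_{w\subset u^c}\int_{[0,1]^w}\prod_{j\in w}(1-s_j)\,B_{u\cup w}(\cdot_u,s_w)\dint s_w$ and the alternating counterpart for $B_u$ in terms of the $A_{u\cup w}$, and then $\int_0^1(1-s)\dint s=\tfrac12$ (for $p=\infty$), respectively $\prod_{j\in w}(1-s_j)\le 1$ plus the reindexing $(u,w)\mapsto \tilde u=u\cup w$ (for $p=1$), do yield all four upper bounds $\le C_{d,p}$. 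Your extremal functions also check out: the monomial sum has anchored components exactly $\gamma_v$ for $v\supset u_0$ (anchored norm $1$) and $u_0$-ANOVA component $\sum_{v\subset u_0^c}2^{-|v|}\gamma_{u_0\cup v}$, its centred analogue swaps the roles, and your $p=1$ function has $\|f\|_{\pitchfork,d,1}=1$ while $\|f\|_{A,d,1}=\sum_{v\subset u_0}(\gamma_{u_0}/\gamma_v)(1-\eps/2)^{|u_0\setminus v|}\to C_{d,1}$. Two small repairs: first, this single $p=1$ function only bounds $\|J^{\pitchfork,A}_{d,1}\|$ from below; for $\|J^{A,\pitchfork}_{d,1}\|$ you need the centred analogue, e.g.\ $f=\gamma_{u_0}\prod_{j\in u_0}\bigl(\min(x_j,\eps)/\eps-(1-\eps/2)\bigr)$, whose ANOVA components vanish off $u_0$ so that $\|f\|_{A,d,1}=1$ while $\|f\|_{\pitchfork,d,1}\to C_{d,1}$. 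Second, the ``hard part'' you anticipate is in fact harmless: all components of these test functions are explicit, the off-diagonal ones converge to precisely the summands of $C_{d,1}$, and the operator norm is a supremum, so the constant need only be approached, not attained. (When some weights vanish, note that the compatibility condition forces the corresponding components of your test functions to vanish, so they lie in the admissible subspaces.) Compared with the paper, which simply cites \cite{HRW14}, your route gives a short self-contained proof of the $p\in\{1,\infty\}$ equalities; it offers nothing for $1<p<\infty$, which is exactly where the paper's interpolation argument takes over.
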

Here we should observe that in \cite{HRW14} real valued functions are treated. But the lower bounds for the norms proved in \cite{HRW14} of course also hold for
complex valued functions. Moreover, the proofs of the upper bounds remain valid also in the complex case. 
This is due to the fact that the inequalities used are triangle inequalities also valid for complex scalars.

Our main result is the following interpolation theorem which extends the upper bounds on the operator norm to $1<p<\infty$. 

\begin{Theo}\label{mainthm}
 Let $1\le p \le \infty$. Then
\begin{equation}\label{mainest} 
    \| J_{d,p}^{\pitchfork,A} :  F_{\pitchfork,d,p} \to  F_{A,d,p} \| \le C_{d,1}^{1/p} C_{d,\infty}^{1-1/p}
    \quad \mbox{and} \quad
		\| J_{d,p}^{A,\pitchfork} :  F_{A,d,p} \to  F_{\pitchfork,d,p} \| \le C_{d,1}^{1/p} C_{d,\infty}^{1-1/p}.
\end{equation}
\end{Theo}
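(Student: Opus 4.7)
The plan is to derive Theorem \ref{mainthm} by complex interpolation from the endpoint cases $p=1$ and $p=\infty$ provided by Theorem \ref{HRW}. Setting $\theta=1-1/p$ so that $1/p=(1-\theta)\cdot 1+\theta\cdot 0$, the claimed bound $C_{d,1}^{1/p} C_{d,\infty}^{1-1/p}=C_{d,1}^{1-\theta} C_{d,\infty}^{\theta}$ is precisely what the Calder\'on interpolation inequality for linear operators would produce, provided that one identifies
$$[F_{\pitchfork,d,1},F_{\pitchfork,d,\infty}]_\theta = F_{\pitchfork,d,p}\qquad\text{and}\qquad[F_{A,d,1},F_{A,d,\infty}]_\theta = F_{A,d,p}$$
isometrically.

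To establish these interpolation identities, I would rely on the representation announced for Section \ref{sec3}: both $F_{\pitchfork,d,p}$ and $F_{A,d,p}$ should be isometrically isomorphic to the $\ell_p$-sum $X_p:=\bigl(\bigoplus_{u\subset[d]} L_p([0,1]^u)\bigr)_{\ell_p}$ via the maps
$$\Phi_p^{\pitchfork}\colon f\mapsto \bigl(\gamma_u^{-1} f^{(u)}(\cdot_u;0)\bigr)_{u},\qquad \Phi_p^{A}\colon f\mapsto \Bigl(\gamma_u^{-1}\int_{[0,1]^{[d]\setminus u}} f^{(u)}(\cdot_u;t)\dint t\Bigr)_{u}.$$
Crucially, the model space $X_p$ is itself an $L_p$-space on a fixed measure space $(\Omega,\mu)$, namely the disjoint union $\Omega=\bigsqcup_u [0,1]^u$ with the natural measure ($p$-independent once the factors $\gamma_u^{-1}$ have been absorbed into the image sequence). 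The classical identity $[L_1(\Omega,\mu),L_\infty(\Omega,\mu)]_\theta=L_p(\Omega,\mu)$ then yields both interpolation identities displayed above.

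With the interpolation structure of domain and range in hand, the standard complex interpolation theorem applies. The operators $J_{d,1}^{\pitchfork,A}$ and $J_{d,\infty}^{\pitchfork,A}$ are restrictions of a single linear map -- the identity on the algebraic tensor product $F_{d,\infty}$, which is dense in every $F_{\pitchfork,d,p}$ -- so they form a compatible family in the sense of Calder\'on. Invoking Theorem \ref{HRW} at the endpoints gives
$$\|J_{d,p}^{\pitchfork,A}\|\le \|J_{d,1}^{\pitchfork,A}\|^{1-\theta}\, \|J_{d,\infty}^{\pitchfork,A}\|^{\theta} = C_{d,1}^{1/p}\, C_{d,\infty}^{1-1/p},$$
and the identical argument with $J^{\pitchfork,A}$ replaced by $J^{A,\pitchfork}$ yields the second inequality in \eqref{mainest}.

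The main obstacle is thereby shifted to Section \ref{sec3}: one must verify that $\Phi_p^{\pitchfork}$ and $\Phi_p^A$ extend to surjective isometries onto the same model space $X_p$ and that this model is obtained as $L_p$ over a $p$-independent underlying measure space -- only then does complex interpolation reduce to the textbook case of $L_p$-spaces on a fixed measure. A secondary subtlety is the compatibility condition on the weights: when $\gamma_u=0$ the corresponding summand is omitted from $X_p$ uniformly in $p$, so the reduced index set $\{u:\gamma_u>0\}$ is identical at both endpoints and the interpolation argument goes through on this common subspace without modification.
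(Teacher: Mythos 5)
Your proposal is correct and follows essentially the same route as the paper: reduce via the isometric identifications of Section~\ref{sec3} to the $\ell_p\left(L_p([0,1]^u)\right)$ model, realize $F_{\pitchfork,d,p}$ and $F_{A,d,p}$ as complex interpolation spaces between the $p=1$ and $p=\infty$ endpoints, and conclude with the exactness (type $\theta$) of the complex method together with the endpoint norms $C_{d,1}$, $C_{d,\infty}$ from Theorem~\ref{HRW}. Your only variation is harmless and if anything slightly more elementary: you absorb the factors $\gamma_u^{-1}$ into the components so that the model becomes a scalar $L_p$ over the fixed disjoint-union measure space $\bigsqcup_u [0,1]^u$ and you can quote the classical Calder\'on identity $[L_1,L_\infty]_\theta=L_p$, whereas the paper instead invokes Triebel's theorems on interpolation of weighted $\ell_p$-sums and vector-valued $L_p$-spaces.
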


We now recall the notions of uniform and polynomial equivalence from \cite[Definition 14]{HRW14}.
The spaces $F_{\pitchfork,d,p}$ and $F_{A,d,p}$ are called uniformly equivalent (in $d$), if there is 
$c>0$ not depending on $d$, such that
$$ \max\left(\| J_{d,p}^{\pitchfork,A}\|,\| J_{d,p}^{A,\pitchfork}\|\right) \le c,$$
which means that
$$ c^{-1}\,\|f\|_{\pitchfork,d,p}\leq\|f\|_{A,d,p}\leq c\,\|f\|_{\pitchfork,d,p} \text{  for all } f \in F_{\pitchfork,d,p}.$$
The spaces $F_{\pitchfork,d,p}$ and $F_{A,d,p}$ are called polynomially equivalent (in $d$), if there is $\tau>0$ not depending on $d$, such that
$$ \max\left(\| J_{d,p}^{\pitchfork,A}\|,\| J_{d,p}^{A,\pitchfork}\|\right)=O(d^\tau),$$
which means that 
$$ c^{-1}d^{-\tau}\|f\|_{\pitchfork,d,p}\leq\|f\|_{A,d,p}\leq cd^\tau\|f\|_{\pitchfork,d,p}  \text{  for all } f \in F_{\pitchfork,d,p}$$
with a constant $c>0$ independent of $d$.
The infimum over all such $\tau$ is called the exponent of polynomial equivalence.
Of course, these concepts can be formulated more general for sequences of spaces indexed by the dimension, where each element is equipped with two norms. 

As a corollary of Theorem \ref{mainthm}, we obtain immediately from \cite[Proposition 17]{HRW14} the sufficiency in the following corollary. The necessity will be shown in
Section \ref{sec5}.
\begin{Cor}\label{maincor} 
 Let $1\le p\le\infty$.
 For product weights $ \gamma_u = \prod_{j\in u} \gamma_j$, a necessary and sufficient condition for the uniform equivalence of $F_{\pitchfork,d,p}$ and $F_{A,d,p}$ is that $(\gamma_j)_{j\in\mathbb{N}}$ is summable.
\end{Cor}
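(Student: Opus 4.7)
The sufficiency is immediate from Theorem~\ref{mainthm}: for product weights a direct expansion of \eqref{const} gives $C_{d,1}=\prod_{j=1}^d(1+\gamma_j)$ and $C_{d,\infty}=\prod_{j=1}^d(1+\gamma_j/2)$, each of which stays bounded in $d$ precisely when $\sum_j\gamma_j<\infty$; the same is then true of the mixed bound $C_{d,1}^{1/p}C_{d,\infty}^{1-1/p}$ in \eqref{mainest}. The substance of the corollary is the converse, which I plan to prove by exhibiting an explicit tensor-product test function whose ANOVA-to-anchored norm ratio already diverges with $d$ whenever $\sum_j\gamma_j=\infty$.

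The natural candidate is
$$ f_d(x)=\prod_{j=1}^d(1+\gamma_j x_j), $$
chosen so that $f_d^{(u)}(x)=\prod_{j\in u}\gamma_j\cdot\prod_{j\notin u}(1+\gamma_j x_j)$ evaluated at $x_{u^c}=0$ reduces to the constant $\prod_{j\in u}\gamma_j$, exactly cancelling $\gamma_u^{-p}$ in \eqref{anchnorm}. Plugging this into \eqref{anchnorm} and \eqref{Anorm} and using $\int_0^1(1+\gamma_j t)\dint t=1+\gamma_j/2$ (and, if $\gamma_j=0$ for some $j$, simply dropping the trivial factor) yields, for $1\le p<\infty$,
$$ \|f_d\|_{\pitchfork,d,p}^p=2^d,\qquad \|f_d\|_{A,d,p}^p=\prod_{j=1}^d\bigl(1+(1+\gamma_j/2)^p\bigr), $$
so that
$$ \bigl(\|f_d\|_{A,d,p}/\|f_d\|_{\pitchfork,d,p}\bigr)^p=\prod_{j=1}^d\frac{1+(1+\gamma_j/2)^p}{2}. $$
For $p=\infty$ the identical ansatz, with suprema in place of sums, gives directly $\|f_d\|_{A,d,\infty}/\|f_d\|_{\pitchfork,d,\infty}=\prod_j(1+\gamma_j/2)$.

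The final step is to show that the above product is unbounded in $d$ whenever $\sum_j\gamma_j=\infty$, for every fixed $p\in[1,\infty]$. By Bernoulli's inequality $(1+\gamma_j/2)^p\ge 1+p\gamma_j/2$ for $p\ge 1$, each logarithmic summand $\log\frac{1+(1+\gamma_j/2)^p}{2}$ is bounded below by $\log(1+p\gamma_j/4)\ge\log(1+\gamma_j/4)$, which in turn exceeds a universal constant multiple of $\min(\gamma_j,1)$; since $\sum_j\min(\gamma_j,1)$ and $\sum_j\gamma_j$ are either both finite or both infinite, the logarithms sum to $+\infty$, the product diverges, and so $\|J_{d,p}^{\pitchfork,A}\|\to\infty$, ruling out uniform equivalence. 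The main obstacle is really only the choice of test function: the ansatz $\prod_j(1+\gamma_j x_j)$ is tuned so that the weights $\gamma_u^{-p}$ cancel cleanly and the ANOVA-versus-anchored discrepancy concentrates in the averages $1+\gamma_j/2$; after that, every subsequent estimate is routine.
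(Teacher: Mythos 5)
Your proposal is correct and follows essentially the same route as the paper: the necessity is proved there with exactly the same test function $f(x)=\prod_{j=1}^d(1+\gamma_jx_j)$, the same computations $\|f\|_{\pitchfork,d,p}^p=2^d$ and $\|f\|_{A,d,p}^p=\prod_{j=1}^d\bigl(1+(1+\gamma_j/2)^p\bigr)$, and the same lower bound $\prod_{j=1}^d(1+\gamma_j/4)$ for the norm ratio. The only cosmetic difference is that for sufficiency you compute $C_{d,1}=\prod_j(1+\gamma_j)$ and $C_{d,\infty}=\prod_j(1+\gamma_j/2)$ directly, whereas the paper cites \cite[Proposition 17]{HRW14} together with Theorem \ref{mainthm}.
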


Further applications to different classes of weights will be considered in Section \ref{sec5}.

\section{Characterization of anchored and ANOVA spaces as $\ell_p(A_j)$-sums}\label{sec3}

Given normed spaces $(A_u)_{u\subset [d]}$, a weight sequence $(\gamma_u)_{u\subset [d]}$ and $p\in[1,\infty]$, the weighted $\ell_p$-sum $\ell_p(A_u)$ of the spaces $A_u$ is the set of
$2^d$-tuples $a=(a_u)_{u\subset [d]}$ with $a_u\in A_u$ and norm
$$ \|a\| = \left( \sum_{u\subset [d]} \gamma_u^{-p} \|a_u\|^p \right)^{1/p}.$$
If $\gamma_u=0$, the corresponding $A_u$ has to be the trivial normed space.

In this section we explain how  the spaces $F_{\pitchfork,d,p}$ and $F_{A,d,p}$ are characterized isometrically as spaces $\ell_p\left(L_p([0,1]^u)\right)$.
Since the completion of $\ell_p(A_u)$ is $\ell_p(\overline{A_u})$, where $\overline{A_u}$ is the completion of $A_u$ we first work in the algebraic tensor products $ F_{d,p} = \bigotimes_{j=1}^d F_p $
equipped with the corresponding norms, i.e.  in $G_{\pitchfork,d,p}$ and  $G_{A,d,p}$. For each function $f\in F_{d,p}$ we consider the operators
$$ R_\pitchfork f = \big( f^{(u)} (x_u;0) \big)_{u\subset[d]}  \qquad \mbox{and} \qquad R_A f = \left( \int_{[0,1]^{[d] \setminus u}} f^{(u)} (\cdot_u;t) \dint t \right)_{u\subset[d]}$$
mapping $f$ to $2^d$-tuples $(g_u^\pitchfork)$ and  $(g_u^A)$, respectively, where the functions $g_u$ depend only on the coordinates in $u$. Note that $g_\emptyset^\pitchfork=f(0)$ and $g_\emptyset^A=\int_{[0,1]^{[d]}} f(t) \dint t$. Hence the norm of $f$ in \eqref{anchnorm} and \eqref{Anorm} can be written as
\begin{equation}\label{iso1} \|f\|_{\pitchfork,d,p} = \left( \sum_{u\subset [d]} \gamma_u^{-p} \|g_u^\pitchfork\|^p_p \right)^{1/p} \qquad \mbox{and} \qquad \|f\|_{A,d,p} = \left( \sum_{u\subset[d]} \gamma_u^{-p} \|g_u^A\|^p_p \right)^{1/p} \end{equation}
where the norms $\|g_u^\pitchfork\|_p$ and $\|g_u^A\|_p$ are $L_p$-norms on the domain $[0,1]^u$. 

We now observe that  $R_\pitchfork f$ and $R_A f$ are actually elements of the spaces $\ell_p(B_u)$ where $B_u \subset A_u = L_p([0,1]^u)$ is the algebraic tensor product $ B_u = \bigotimes_{j\in u} L_p[0,1] $ (with $B_\emptyset=A_\emptyset=\mathbb{R}$).
It also follows from \eqref{iso1} that the mappings $R_\pitchfork$ and $R_A$ considered as mappings from  $ F_{d,p} = \bigotimes_{j=1}^d F_p $
to $\ell_p (B_u)$ are isometric embeddings, i.e.
\[\|f\|_{\pitchfork,d,p} = \|R_\pitchfork f|l_p(B_u)\|\quad\text{and}\quad\|f\|_{A,d,p} = \|R_A f|l_p(B_u)\|.\]

We now show that they actually are isometric isomorphisms. 
In the anchored case, given $g=(g_u) \in \ell_p(B_u)$, we let
$$ f(x) = (S_\pitchfork g) ( x ) = \sum_{u\subset [d]} \int_{[0,x]^u} g_u(t;0) \dint t = \sum_{u\subset [d]} f_u(x) .$$
If $g_u(x)=\prod_{j\in u} h_j(x_j)$ is an elementary tensor in $ B_u = \bigotimes_{j\in u} L_p[0,1] $, then
$$ f_u (x) = \int_{[0,x]^u} g_u(t;0) \dint t = \prod_{j\in u} \int_0^{x_j} h_j(t_j) \dint t_j $$
shows that $$ f_u \in  \bigotimes_{j\in u} F_p  \subset \bigotimes_{j=1}^d F_p = F_{d,p}. $$
Consequently, $S_\pitchfork$ indeed maps $\ell_p(B_u)$ into $F_{d,p}$.

Now $f_u$ depends only on the coordinates in $u$, hence $f_u^{(v)}=0$ if $v  \not\subset u$.
Moreover, if $v \subset u$ and $v\neq u$, then $f_u^{(v)} ( \cdot_v ; 0) = 0$, since for the coordinates in $u \setminus v$ the integration in
$$ f_u(x) = \int_{[0,x]^u} g_u(t;0) \dint t$$
is over the trivial interval $[0,0]$.
Hence $f_u^{(v)} ( \cdot_v ; 0) = 0$ unless $u=v$. Consequently,
$$ f^{(u)} ( \cdot_u ; 0) = f_u^{(u)} ( \cdot_u ; 0)=g_u,$$
which shows that $\|S_\pitchfork g\|_{\pitchfork,d,p}=\|g|l_p(B_u)\|$. Therefore $S_\pitchfork$ is also an isometric embedding. Using the above notation together with (1) in
\cite[Lemma 1]{HRW14}, we find that
\[S_\pitchfork[R_\pitchfork f](x)=S_\pitchfork[(g_u^\pitchfork)_u](x)=\sum_{u\subset [d]} \int_{[0,x]^u} f^{(u)}(t;0) \dint t=f(x).\]
Observe that Lemma 1 in \cite{HRW14} is formulated only for $f$ in the algebraic tensor product of $C^\infty$-functions, but the proof immediately extends to $f \in F_{d,p}$.
This shows that $S_\pitchfork R_\pitchfork = Id_{ F_{d,p}}$.
Since injectivity of $S_\pitchfork$ together with $S_\pitchfork R_\pitchfork = Id_{ F_{d,p}}$ automatically implies $R_\pitchfork S_\pitchfork = Id_{\ell_p(B_u)}$,
we have shown that $S_\pitchfork$ and $R_\pitchfork$ are isometric isomorphisms.

In the ANOVA case we use the one-dimensional identity
$$ f(x) = \int_0^1 f(t) \dint t + \int_0^1 \left( t - \chi_{[x,1]}(t) \right) f'(t) \dint t $$ 
for $f\in F_p$ where
$\chi_{[a,b]}$ is the indicator function of the interval $[a,b]$.
Taking tensor products, this leads to the $d$-dimensional identity
\begin{equation}\label{anovid}
 f(x) = \sum_{u\subset [d]}  \int_{[0,x]^u} \prod_{j\in u} \left( t_j - \chi_{[x_j,1]}(t_j) \right) g_u(t) \dint t 
\end{equation} 
for $f \in  F_{d,p}$
where 
$$ g_u = \int_{[0,1]^{[d] \setminus u}} f^{(u)} (\cdot_u;t) \dint t $$
is the corresponding term in the ANOVA-decomposition of $f$, compare (2) in \cite[Lemma 1]{HRW14}.
So, given $g=(g_u) \in \ell_p(B_u)$, we let
$$ f(x) = (S_A g) ( x ) = \sum_{u\subset [d]} \int_{[0,x]^u} \prod_{j\in u} \left( t_j - \chi_{[x_j,1]}(t_j) \right) g_u(t) \dint t = \sum_{u\subset [d]} f_u(x). $$ 
If $g_u(x)=\prod_{j\in u} h_j(x_j)$ is an elementary tensor in $ B_u = \bigotimes_{j\in u} L_p[0,1] $, then
$$ f_u (x) = \int_{[0,x]^u} \prod_{j\in u} \left( t_j - \chi_{[x_j,1]}(t_j) \right) g_u(t) \dint t = \prod_{j\in u} \int_0^{x_j} \left( t_j - \chi_{[x_j,1]}(t_j) \right) h_j(t_j) \dint t_j $$
shows that $$ f_u \in  \bigotimes_{j\in u} F_p  \subset \bigotimes_{j=1}^d F_p = F_{d,p}. $$
Consequently, $S_A$ maps $\ell_p(B_u)$ into $F_{d,p}$.

Now $f_u$ depends only on the coordinates in $u$, hence $f_u^{(v)}=0$ if $v  \not\subset u$.
Moreover, if $v \subset u$ and $v\neq u$, then 
$$ \int_{[0,1]^{[d] \setminus v}} f_u^{(v)} (\cdot_v;t) \dint t  = 0$$
since the variables $x_j$ with $j \in u \setminus v$ lead to integrals $\int_0^1 \left( t_j - \chi_{[x_j,1]}(t_j) \right) \dint x_j = 0$.
Consequently,
$$ \int_{[0,1]^{[d] \setminus u}} f^{(u)} (\cdot_u;t) \dint t = g_u,$$
which shows that $\|S_A g\|_{A,d,p}=\|g|l_p(B_u)\|$. Therefore $S_A$ is also an isometric embedding. 
Using the above notation together with \eqref{anovid}, we find that
\[S_A[R_A f](x)=S_A[(g_u^A)_u](x)= \sum_{u\subset [d]}  \int_{[0,x]^u} \prod_{j\in u} \left( t_j - \chi_{[x_j,1]}(t_j) \right) g_u^A(t) \dint t  = f(x).\]
This shows that $S_A R_A = Id_{ F_{d,p}}$.
Since injectivity of $S_A$ together with $S_A R_A = Id_{ F_{d,p}}$ automatically implies $R_A S_A = Id_{\ell_p(B_u)}$,
we have shown that $S_A$ and $R_A$ are also isometric isomorphisms.

Now extending  $S_\pitchfork$, $S_A$, $R_\pitchfork$, $R_A$ to the completions and observing that the completion of $B_u$ is just $L_p([0,1]^u)$, 
we obtain isometric isomorphisms of the spaces $F_{\pitchfork,d,p}$ and $F_{A,d,p}$ on the one side and $\ell_p\left(L_p([0,1]^u)\right)$ on the other side.
Slightly abusing notation, we will denote these extensions again with $S_\pitchfork$, $S_A$, $R_\pitchfork$, $R_A$, respectively.

\begin{Rem}
 If the weights $\gamma_u$ are all positive, then we obtain that the spaces $F_{\pitchfork,d,p}$ and $F_{A,d,p}$ are actually the classical spaces $W_p^{mix}$ of functions in $f \in L_p$ whose
 mixed derivative $\frac{\partial^d f}{\partial x_1 \dots \partial x_d}$ also belongs to $L_p$, 
 see \cite{HRW14}. If some of the weights $\gamma_u$ are zero, we obtain subspaces of  $W_p^{mix}$ where the corresponding
 derivatives $f^{(u)}$ satisfy 
 $$ f^{(u)} (x_u;0) =0 \qquad \mbox{and} \qquad \int_{[0,1]^{[d] \setminus u}} f^{(u)} (\cdot_u;t) \dint t=0$$ 
 in the anchored case or in the ANOVA case, respectively.
\end{Rem}

\section{Proof of Theorem \ref{mainthm}}\label{sec4}

In the previous section, we have shown that the spaces $F_{\pitchfork,d,p}$ and $F_{A,d,p}$ are 
isometrically isomorhic to $\ell_p\left(L_p([0,1]^u)\right)$. Now we shortly state known results for this type of spaces for complex interpolation. The corresponding theory is described in detail in \cite{Tr78}, Section 1.9. The relevant results for spaces of type $\ell_p(A_u)$ with $A_u=L_p([0,1]^u)$ are Theorem 1.18.1 (formula (4)) in \cite{Tr78}, i.e.
\begin{equation}\label{intpol1}\left[\ell_{p_0}(A_j),\ell_{p_1}(B_j)\right]_\theta=\ell_p\left([A_j,B_j]_\theta\right),\end{equation}
complemented by the following Remark 2, and Theorem 1.18.6.2 (formula (15)) in \cite{Tr78}, i.e.
\begin{equation}\label{intpol2}[L_{p_0}(A),L_{p_1}(A)]_\theta=L_p(A),\end{equation}
for Banach spaces $A,A_j,B_j$ and $1/p=(1-\theta)/p_0+\theta/p_1$ with $0<\theta<1$.
We want to apply \eqref{intpol1} and \eqref{intpol2} to the situation corresponding to Figure \ref{interpol} to prove Theorem \ref{mainthm}. 

\begin{figure}[h]
\centering
\unitlength1cm
\begin{tikzpicture}[thick]
\draw (0.5,0) -- (1.4,0); \draw (2.6,0) -- (3.4,0); \draw (0.5,-3) -- (1.4,-3); \draw (2.6,-3) -- (3.4,-3);
\draw[->] (0,-0.5) -- (0,-2.5); \draw[->] (2,-0.5) -- (2,-2.5); \draw[->] (4,-0.5) -- (4,-2.5);
\draw (0,0) node {$F_{\pitchfork,d,1}$}; \draw (4,0) node {$F_{\pitchfork,d,\infty}$}; \node at (2,0) [rectangle,draw] {$F_{\pitchfork,d,p}$};
\draw (0,-3) node {$F_{A,d,1}$}; \draw (4,-3) node {$F_{A,d,\infty}$}; \node at (2,-3) [rectangle,draw] {$F_{A,d,p}$};
\draw (-0.4,-1.5) node {$J_{d,1}^{\pitchfork,A}$}; \draw (1.6,-1.5) node {$J_{d,p}^{\pitchfork,A}$};
\draw (3.6,-1.5) node {$J_{d,\infty}^{\pitchfork,A}$}; 
\end{tikzpicture}
\caption{\label{interpol}Interpolation scheme}
\end{figure}
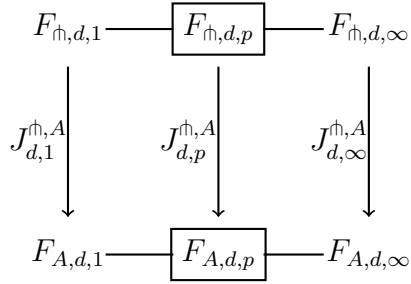

For the operators $J_{d,p}^{\pitchfork,A}$ and $J_{d,p}^{A,\pitchfork}$ (with arrows in Figure \ref{interpol} pointing in the opposite direction), we know from \cite{HRW14}, see 
Theorem \ref{HRW}, that
\[\left\| J_{d,1}^{\pitchfork,A} \right\| = \left\| J_{d,1}^{A,\pitchfork} \right\| = C_{d,1}\quad\text{ and }\quad \left\| J_{d,\infty}^{\pitchfork,A} \right\| = \left\| J_{d,\infty}^{A,\pitchfork} \right\| = C_{d,\infty},\]
with $C_{d,1},C_{d,\infty}$ from \eqref{const}. Now, \eqref{intpol1} and \eqref{intpol2} give
\[\left[\ell_1\left(L_1([0,1]^u)\right),\ell_\infty\left(L_\infty([0,1]^u)\right)\right]_\theta=\ell_p\left(L_p([0,1]^u)\right)\quad\text{ with }\quad p=\frac{1}{1-\theta},\quad 0<\theta<1.\]
Using the isometric isomorphisms $S_\pitchfork$, $S_A$, $R_\pitchfork$, $R_A$, this characterizes the spaces $F_{\pitchfork,d,p}$ and $F_{A,d,p}$ as complex interpolation spaces
$$ F_{\pitchfork,d,p} = \left[ F_{\pitchfork,d,1} , F_{\pitchfork,d,\infty} \right]_\theta \qquad \text{and}\qquad F_{A,d,p} = \left[ F_{A,d,1} , F_{A,d,\infty} \right]_\theta $$
justifying the above diagram.

Because the corresponding interpolation functor is exact of type $\theta$ (see Theorem 1.9.3 in \cite{Tr78}, we get
\[\left\| J_{d,p}^{\pitchfork,A} \right\| \le \left\| J_{d,1}^{\pitchfork,A} \right\|^{1-\theta} \left\| J_{d,\infty}^{\pitchfork,A} \right\|^{\theta} = C_{d,1}^{1/p}C_{d,\infty}^{1-1/p}=:C_{d,p},\]
which is the left side of \eqref{mainest}. Since the arguments are identical for $J_{d,p}^{A,\pitchfork}$, we also get 
\[\left\| J_{d,p}^{A,\pitchfork} \right\| \le \left\| J_{d,1}^{A,\pitchfork} \right\|^{1-\theta} \left\| J_{d,\infty}^{A,\pitchfork} \right\|^{\theta} = C_{d,p},\]
which completes the proof.

\begin{Rem}
 For some potential applications it might be useful to use different weight sequences for different $p$.
 This is possible without much difficulties. Let the spaces $F_{\pitchfork,d,p}$ and $F_{A,d,p}$ be defined in \eqref{anchnorm} and \eqref{Anorm}, respectively,  
 with a weight sequence $(\gamma_{u,p})_{u\subset [d]}$ depending on $p$. Then Theorem 1.8.10.5 in \cite{Tr78} shows that the complex interpolation spaces
 with index $\theta$ between $p=1$ and $p=\infty$ are just given by the interpolated weights $ \gamma_{u,p} = \gamma_{u,1}^{1-\theta} \cdot \gamma_{u,\infty}^\theta$.
 Then Theorem \ref{mainthm} holds verbatim.
\end{Rem}

\section{Applications to different classes of weights}\label{sec5}

We now consider special classes of weights as in \cite{HRW14} and extend the corresponding results from $p=1,\infty$ to the whole range $p\in [1,\infty]$, if
that is possible.

We start with {\em product weights} introduced in \cite{SW98}, for which 
\[\gamma_u=\prod_{j\in u}\gamma_j,\quad\text{ for } u\subset [d]\]
where $(\gamma_j)_{j\in\mathbb{N}}$ is a given sequence of positive numbers. 
Product weights  were already mentioned in Corollary \ref{maincor}. 
To prove that corollary, it remains to show that the summability of $(\gamma_j)_{j\in\mathbb{N}}$ is necessary for the uniform equivalence of $F_{\pitchfork,d,p}$ and $F_{A,d,p}$ in case $p\in[1,\infty]$. 
To this end, we consider the function
\[f(x)=f(x_1,\ldots,x_d)=\prod_{j=1}^d(1+\gamma_jx_j).\]
Then we have
\[f^{(u)}(\cdot_u,0)=\gamma_u\quad\text{ and }\quad \int_{[0,1]^{[d] \setminus u}} f^{(u)} (\cdot_u;t) \dint t=\gamma_u\prod_{j\notin u}(1+\gamma_j/2)\]
and calculate
\[\|f\|_{\pitchfork,d,p}^p=\sum_{u\subset[d]} \gamma_u^{-p} \int_{[0,1]^u} |f^{(u)}(x_u;0)|^p\dint x_u=2^d\]
as well as 
\begin{eqnarray*}\|f\|_{A,d,p}^p&=&\sum_{u\subset[d]} \gamma_u^{-p}  \int_{[0,1]^u} \left| \int_{[0,1]^{[d] \setminus u}} f^{(u)} (x_u;t) \dint t \right|^p \dint x_u=\sum_{u\subset[d]}\prod_{j\notin u}(1+\gamma_j/2)^p\\
&=&\prod_{j=1}^d(1+(1+\gamma_j/2)^p).\end{eqnarray*}
Now we see 
\[\left\| J_{d,p}^{\pitchfork,A} \right\|^p\geq\frac{\|f\|_{A,d,p}^p}{\|f\|_{\pitchfork,d,p}^p}=\prod_{j=1}^d\frac{1+(1+\gamma_j/2)^p}{2}\geq\prod_{j=1}^d (1+\gamma_j/4),\]
which shows that, if the spaces $F_{\pitchfork,d,p}$ and $F_{A,d,p}$ are uniformly equivalent, then $(\gamma_j)_{j\in\mathbb{N}}$ must be summable.
This finishes the proof of Corollary \ref{maincor}.

The corresponding result for polynomial equivalence is
\begin{Cor}
 Let $1\le p\le\infty$.
 For product weights $ \gamma_u = \prod_{j\in u} \gamma_j$, a necessary and sufficient condition for the polynomial equivalence of $F_{\pitchfork,d,p}$ and $F_{A,d,p}$ is that 
 \begin{equation}\label{xxx}
  \tau_0 = \sup_{d\in\mathbb{N}}\frac{\sum_{j=1}^d\gamma_j}{\ln(d+1)}<\infty.
 \end{equation}
 If this holds, then the exponent $\tau$ of polynomial equivalence is at most $$  \frac{\tau_0}{2} \left( 1+\frac1p \right).$$ 
\end{Cor}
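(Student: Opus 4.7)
The plan is to evaluate the two constants in Theorem~\ref{mainthm} explicitly for product weights, and then to re-use the test function already introduced in the proof of Corollary~\ref{maincor} for the converse direction.

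\textbf{Sufficiency.} For product weights one has $\sum_{v\subset u}\gamma_u/\gamma_v=\prod_{j\in u}(1+\gamma_j)$ and $\sum_{v\subset u^c}2^{-|v|}\gamma_{u\cup v}/\gamma_u=\prod_{j\in u^c}(1+\gamma_j/2)$, so the maxima in \eqref{const} are attained at $u=[d]$ and $u=\emptyset$ respectively, giving $C_{d,1}=\prod_{j=1}^d(1+\gamma_j)$ and $C_{d,\infty}=\prod_{j=1}^d(1+\gamma_j/2)$. Theorem~\ref{mainthm} combined with $\ln(1+x)\le x$ then yields
\[
\ln\|J_{d,p}^{\pitchfork,A}\|\le\frac{1}{p}\sum_{j=1}^d\gamma_j+\left(1-\frac{1}{p}\right)\frac{1}{2}\sum_{j=1}^d\gamma_j=\frac{1}{2}\left(1+\frac{1}{p}\right)S_d,
\]
where $S_d=\sum_{j=1}^d\gamma_j$. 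The hypothesis $\tau_0<\infty$ supplies $S_d\le\tau_0\ln(d+1)$, so $\|J_{d,p}^{\pitchfork,A}\|\le(d+1)^{\tau_0(1+1/p)/2}$, which simultaneously proves sufficiency and gives the claimed bound on the exponent. The identical reasoning applies to $J_{d,p}^{A,\pitchfork}$.

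\textbf{Necessity.} I reuse the test function $f(x)=\prod_{j=1}^d(1+\gamma_j x_j)$: the computation already carried out in the proof of Corollary~\ref{maincor} yields
\[
\|J_{d,p}^{\pitchfork,A}\|^p\ge\prod_{j=1}^d\frac{1+(1+\gamma_j/2)^p}{2},
\]
and the convexity estimate $(1+\gamma_j/2)^p\ge 1+p\gamma_j/2$ (valid for $p\ge 1$) simplifies this to $\|J_{d,p}^{\pitchfork,A}\|^p\ge\prod_{j=1}^d(1+\gamma_j/4)$. Taking logarithms, polynomial equivalence with exponent $\tau$ forces $\sum_{j=1}^d\ln(1+\gamma_j/4)=O(\ln d)$, and under the boundedness of $(\gamma_j)$ standard for product weights the elementary inequality $\ln(1+\gamma_j/4)\ge c\,\gamma_j$ converts this into $S_d=O(\ln d)$, i.e.\ $\tau_0<\infty$.

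\textbf{Main obstacle.} Both directions are concrete evaluations of material already in the paper; the only step that is not automatic is the final implication $\sum\ln(1+\gamma_j/4)=O(\ln d)\Rightarrow S_d=O(\ln d)$. For the bounded (typically decreasing) product weights considered in the literature this is a one-line inequality, but handling completely general sequences would require controlling $\gamma_j$ individually, e.g.\ via the simple test function $f(x)=x_{j_0}$ which yields $\|J_{d,p}^{\pitchfork,A}\|\gtrsim\gamma_{j_0}$ and hence forces polynomial growth of the weights.
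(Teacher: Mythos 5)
Your argument is essentially the paper's: sufficiency by interpolating the two endpoint bounds, necessity by the test function $f(x)=\prod_{j=1}^d(1+\gamma_jx_j)$. The only difference on the sufficiency side is that the paper simply cites \cite[Proposition 17 (ii)]{HRW14} for the endpoint exponents $\tau_0$ (at $p=1$) and $\tau_0/2$ (at $p=\infty$) and then applies Theorem \ref{mainthm}, whereas you re-derive these endpoints by evaluating \eqref{const} for product weights, $C_{d,1}=\prod_{j\le d}(1+\gamma_j)$ and $C_{d,\infty}=\prod_{j\le d}(1+\gamma_j/2)$, and using $\ln(1+x)\le x$; this is correct and makes the proof self-contained, arriving at exactly the claimed exponent $\frac{\tau_0}{2}(1+\frac1p)$. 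On the necessity side, the obstacle you flag is real, but it is shared by the paper, whose entire necessity proof is the remark that the same function as in the uniform case works: passing from $\sum_{j\le d}\ln(1+\gamma_j/4)=O(\ln d)$ to $\sum_{j\le d}\gamma_j=O(\ln d)$ does require the weights $(\gamma_j)$ to be bounded, which is the standard (implicit) assumption for product weights; under it your argument is complete. Be aware, though, that your proposed remedy for unbounded sequences does not work: the test function $x_{j_0}$ only yields $\gamma_{j_0}\le c\,d^{\tau p}$, i.e. polynomially growing individual weights, and such growth is compatible with $\tau_0=\infty$ while every product $\prod_{j\le d}(1+c\gamma_j)$, and hence $C_{d,1}^{1/p}C_{d,\infty}^{1-1/p}$, stays polynomially bounded. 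For instance, take $\gamma_j=2^{-j}$ except $\gamma_{m_k}=\sqrt{m_k}$ at $m_k=2^{2^k}$: then $\ln C_{d,1}\le 2\ln(d+1)$ eventually, so polynomial equivalence holds for all $p$ by Theorem \ref{mainthm}, yet $\sup_d\sum_{j\le d}\gamma_j/\ln(d+1)=\infty$. So for unbounded weights the necessity direction of the statement itself fails, and no choice of extra test function can repair it; the boundedness hypothesis is what makes your (and the paper's) necessity argument legitimate, and it should be stated rather than patched around.
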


\begin{proof} 
For the sufficiency, we now use \cite[Proposition 17, (ii)]{HRW14} showing that polynomial equivalence for $p\in\{1,\infty\}$
holds if and only if $\tau_0<\infty$. Moreover, it is also shown there that the exponents of polynomial equivalence are $\tau_0/2$ for $p=\infty$ and $\tau_0$ for $p=1$, 
so the upper bound for the exponent of polynomial equivalence for $1<p<\infty$ follows from the interpolation inequalities
\[\left\| J_{d,p}^{\pitchfork,A} \right\| \le \left\| J_{d,1}^{\pitchfork,A} \right\|^{1-\theta} \left\| J_{d,\infty}^{\pitchfork,A} \right\|^{\theta} = C_{d,1}^{1/p}C_{d,\infty}^{1-1/p}=C_{d,p}\]
and 
\[\left\| J_{d,p}^{A,\pitchfork} \right\| \le \left\| J_{d,1}^{A,\pitchfork} \right\|^{1-\theta} \left\| J_{d,\infty}^{A,\pitchfork} \right\|^{\theta} = C_{d,1}^{1/p}C_{d,\infty}^{1-1/p}=C_{d,p}.\]
The necessity of \eqref{xxx} follows by the same arguments (for the same function) as used above for the uniform equivalence. 
\end{proof}

Now we discuss the class of so-called {\em finite order weights} first considered in \cite{DSWW} and having the property that
\[\gamma_u=0\quad\text{ if } |u|>q\in\mathbb{N}\quad\text{ ($q$ is called order).}\]
Using \cite[Proposition 19]{HRW14} we can state 
\begin{Cor}
Let $1\le p\le\infty$.
If there exist numbers $c,\omega>0$, such that
\[\gamma_u=c\,\omega^{|u|}\quad\text{ for all $|u|\leq q$},\]
then there is polynomial equivalence of the spaces $F_{\pitchfork,d,p}$ and $F_{A,d,p}$ equipped with finite order weights. The corresponding exponent 
of polynomial equivalence is $\tau=q$.
\end{Cor}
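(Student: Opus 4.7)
The plan is to invoke Theorem \ref{mainthm} and estimate the two constants $C_{d,1}$ and $C_{d,\infty}$ explicitly for the weights $\gamma_u=c\omega^{|u|}$ (for $|u|\le q$, and zero otherwise). Since Theorem \ref{mainthm} bounds both operator norms by $C_{d,1}^{1/p}C_{d,\infty}^{1-1/p}$, it suffices to establish $C_{d,1}=O(1)$ and $C_{d,\infty}=O(d^q)$; polynomial equivalence with exponent at most $q$ then follows for every $p\in[1,\infty]$.

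For $C_{d,1}=\max_u\sum_{v\subset u}\gamma_u/\gamma_v$, I would restrict the maximum to $u$ with $\gamma_u>0$, i.e.\ $|u|\le q$ (the compatibility condition makes this automatic). For such $u$ the ratio $\gamma_u/\gamma_v$ equals $\omega^{|u|-|v|}$, and grouping the subsets $v\subset u$ by cardinality gives
\[\sum_{v\subset u}\frac{\gamma_u}{\gamma_v}=\sum_{k=0}^{|u|}\binom{|u|}{k}\omega^{|u|-k}=(1+\omega)^{|u|}.\]
Maximizing over $|u|\le q$ yields $C_{d,1}=(1+\omega)^q$, a constant independent of $d$. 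For $C_{d,\infty}=\max_u\sum_{v\subset u^c}2^{-|v|}\gamma_{u\cup v}/\gamma_u$, restricting again to $|u|\le q$ and to $v$ with $|u|+|v|\le q$, the ratio equals $\omega^{|v|}$ and the inner sum becomes
\[\sum_{k=0}^{q-|u|}\binom{d-|u|}{k}(\omega/2)^{k},\]
a polynomial in $d$ of degree $q-|u|$. The maximum over $|u|\le q$ is attained at $|u|=0$ and is bounded by a constant (depending only on $\omega$ and $q$) times $d^q$.

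Plugging these estimates into Theorem \ref{mainthm} gives $\|J_{d,p}^{\pitchfork,A}\|,\,\|J_{d,p}^{A,\pitchfork}\|\le C_{d,1}^{1/p}C_{d,\infty}^{1-1/p}=O(d^{q(1-1/p)})\le O(d^q)$ uniformly in $d$, which is precisely polynomial equivalence with exponent at most $q$. The endpoint cases $p=1$ and $p=\infty$, together with the sharpness of the exponent $q$ at $p=\infty$, are already covered by \cite[Proposition 19]{HRW14}, so nothing additional is needed there. The main point to watch in the proof is simply the zero-weight convention: one must make sure the maxima defining $C_{d,1}$ and $C_{d,\infty}$ only include $u$ with $\gamma_u>0$, which the compatibility condition guarantees. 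Beyond that, the argument is essentially bookkeeping with binomial sums, so there is no genuine obstacle.
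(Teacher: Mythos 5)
Your proposal is correct and follows essentially the same route as the paper: apply Theorem \ref{mainthm} with the endpoint constants $C_{d,1}$ and $C_{d,\infty}$, which the paper simply imports from \cite[Proposition 19]{HRW14} while you recompute them directly via the binomial sums $C_{d,1}=(1+\omega)^{\min(q,d)}$ and $C_{d,\infty}=\sum_{k=0}^{q}\binom{d}{k}(\omega/2)^{k}=O(d^q)$. Your bookkeeping is sound (the constant $c$ cancels and the zero-weight/compatibility convention is handled correctly), and it even yields the slightly sharper upper exponent $q(1-1/p)$, with the stated exponent $\tau=q$ understood, as in the paper, as the value for which polynomial equivalence is guaranteed uniformly in $p$.
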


\begin{Rem}
The original condition on the weights $\gamma_u$ in \cite{HRW14} looks a bit different because the cases $p=1$ and $p=\infty$ are treated separately. There one can also see that the only known case where uniform equivalence holds, is $p=1$ for these weights. So here we can not use our interpolation technique to establish the corresponding result for $p\in[1,\infty]$.
\end{Rem}

As a last application we consider here special {\em dimension-dependent weights}, introduced in \cite{HW13}, where $\gamma_u=d^{-|u|}$.  Now \cite[Proposition 20]{HRW14} gives immediately  
\begin{Cor}
For $p\in[1,\infty]$ and weights $\gamma_u=d^{-|u|}$ the spaces $F_{\pitchfork,d,p}$ and $F_{A,d,p}$ are uniformly equivalent.
\end{Cor}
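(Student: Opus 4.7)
The plan is to combine Theorem \ref{mainthm} with the known endpoint bounds. By Theorem \ref{HRW} (which is Theorem 14 of \cite{HRW14}) together with Proposition 20 of \cite{HRW14}, for the weights $\gamma_u = d^{-|u|}$ the constants $C_{d,1}$ and $C_{d,\infty}$ defined in \eqref{const} are bounded uniformly in $d$; this gives uniform equivalence of the anchored and ANOVA spaces in the two endpoint cases $p=1$ and $p=\infty$. The interpolation estimate \eqref{mainest} of Theorem \ref{mainthm} then automatically transfers this to all intermediate $p$.

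Concretely, I would verify (or simply recall) the endpoint bounds by direct computation. Writing $k=|u|$, one has
\[
\sum_{v\subset u}\frac{\gamma_u}{\gamma_v}=\sum_{j=0}^{k}\binom{k}{j}d^{j-k}=\bigl(1+\tfrac{1}{d}\bigr)^{k},
\]
so that $C_{d,1}\le (1+1/d)^d\le \mathrm{e}$. Similarly, since $u$ and $v\subset u^c$ are disjoint so that $\gamma_{u\cup v}/\gamma_u=d^{-|v|}$,
\[
\sum_{v\subset u^c}2^{-|v|}\frac{\gamma_{u\cup v}}{\gamma_u}=\sum_{j=0}^{d-|u|}\binom{d-|u|}{j}(2d)^{-j}=\bigl(1+\tfrac{1}{2d}\bigr)^{d-|u|},
\]
so that $C_{d,\infty}\le (1+1/(2d))^d\le \mathrm{e}^{1/2}$. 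Both bounds are independent of $d$.

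Applying Theorem \ref{mainthm} we obtain
\[
\max\!\left(\bigl\|J_{d,p}^{\pitchfork,A}\bigr\|,\bigl\|J_{d,p}^{A,\pitchfork}\bigr\|\right)\le C_{d,1}^{1/p}C_{d,\infty}^{1-1/p}\le \mathrm{e}^{1/p}\cdot \mathrm{e}^{(1-1/p)/2}=\mathrm{e}^{(1+1/p)/2},
\]
which is bounded by a constant independent of $d$ (and in fact $\le\mathrm{e}$). This is exactly the definition of uniform equivalence.

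There is no real obstacle here: once Theorem \ref{mainthm} is in hand, the only thing to check is that the endpoint constants $C_{d,1}$ and $C_{d,\infty}$ for this weight sequence are $O(1)$, which reduces to the elementary limit $(1+c/d)^d\to \mathrm{e}^c$. The mild subtlety worth pointing out is that the corollary requires both operator norms to be controlled; Theorem \ref{mainthm} conveniently gives bounds on $\|J_{d,p}^{\pitchfork,A}\|$ and $\|J_{d,p}^{A,\pitchfork}\|$ simultaneously, so a single application covers both directions of the equivalence.
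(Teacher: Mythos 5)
Your proof is correct and follows the same route as the paper: the paper deduces the corollary immediately from Theorem \ref{mainthm} together with the endpoint bounds of \cite[Proposition 20]{HRW14}, which is exactly your argument, except that you additionally verify the uniform bounds $C_{d,1}\le\mathrm{e}$ and $C_{d,\infty}\le\mathrm{e}^{1/2}$ by direct (and correct) computation rather than citing them.
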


{\bf Acknowledgment:} We thank two anonymous referees for valuable remarks that improved the presentation of our paper.

\end{document}